\numberwithin{equation}{section}
\theoremstyle{plain}
\newtheorem{Th}{Theorem}[section]
\newtheorem{Lemma}[Th]{Lemma}
 \theoremstyle{definition}
\newtheorem{Def}[Th]{Definition}
\newtheorem{Rem}[Th]{Remark}
\newtheorem{?}[Th]{Problem}
\newcommand*\R{\mathbb{R}}
\newcommand*\Om{\Omega}
\newcommand*\delomega{\partial\Omega}
\newcommand{\Div}{\text{div}_x}
\newcommand{\vectoru}{\mathbf{u}}
\newcommand{\vectorU}{\mathbf{U}}
\newcommand{\vectorv}{\mathbf{v}}
\newcommand{\Epsilon}{\mathcal{E}}
\newcommand{\dx}{\text{ d}x}
\newcommand{\dt}{\text{ d}t\;}
\newcommand{\vectorphi}{\pmb{\varphi}}
\newcommand{\Nu}{\mathcal{V}}
\newcommand{\Nutx}{\mathcal{V}_{t,x}}
\newcommand{\DD}{\mathbb{D}}
\newcommand{\trace}{\text{tr}}
\begin{document}

\title{On weak (measure--valued)--strong uniqueness for compressible Navier-Stokes system with non-monotone pressure law}

\author{Nilasis Chaudhuri
	%\thanks{E-mail:\tt chaudhuri@math.tu-berlin.de}
}
\maketitle

\centerline{Technische Universit{\"a}t, Berlin}

\centerline{Institute f{\"u}r Mathematik, Stra\ss e des 17. Juni 136, D -- 10623 Berlin, Germany.}

\begin{abstract} 
	In this paper our goal is to define a \emph{renormalised dissipative measure--valued} (\text{rDMV}) \emph{solution of compressible Navier--Stokes system for fluids} with non-monotone pressure--density relation. We prove existence of rDMV solutions and establish a suitable relative energy inequality. Moreover we obtain the Weak (Measure-valued)-Strong uniqueness property of this rDMV solution with the help of relative energy inequality.
\end{abstract}

{\bf Keywords:} Compressible Navier--Stokes system, measure--valued solution, weak--strong uniqueness, non--monotone pressure. \\

{\bf AMS classification:} Primary: 35Q30; Secondary: 35B30, 76N10

\section{Introduction}

Let $T>0$ and $\Omega \subset \mathbb{R}^d,\; d\in \{1,2,3\}$ be a bounded domain with smooth boundary. We consider the compressible Navier-Stokes equation in time-space cylinder $Q_T=(0,T)\times \Omega$ describing the time evolution of the mass density $\varrho=\varrho(t,x)$ and the velocity field $\vectoru=\vectoru(t,x)$ of a compressible viscous fluid:
\begin{itemize}
	\item \textbf{Conservation of Mass:}
	\begin{align}
	\partial_t \varrho + \text{div}_x (\varrho \mathbf{u})&=0. \label{cnse:cont}
	\end{align}
	\item \textbf{Conservation of Momentum:}
	\begin{align}
	\partial_t(\varrho \mathbf{u}) + \Div (\varrho \mathbf{u} \otimes \mathbf{u})+\nabla_x p(\varrho)&=\Div \mathbb{S}(\nabla_x \mathbf{u}). \label{cnse:mom}
	\end{align}
	\item  \textbf{Constitutive Relation:}
	Here $\mathbb{S}(\nabla_x \mathbf{u})$ is \textit{Newtonian stress tensor} defined by
	\begin{align}\label{cauchy_str}
	\mathbb{S}(\nabla_x \mathbf{u})=\mu \bigg(\frac{\nabla_x \vectoru + \nabla_x^{T} \vectoru}{2}-\frac{1}{d} (\Div\vectoru)\mathbb{I} \bigg) + \lambda (\Div \vectoru) \mathbb{I},
	\end{align}
	where $\mu >0$ and $\lambda >0$ are the \textit{shear} and \textit{bulk} viscosity coefficients, respectively.
	\item \textbf{Pressure Law:} In an isentropic setting, the pressure $p$ and the density $\varrho$ of the fluid are interrelated by :
		\begin{align}\label{p-condition}
		\begin{split}
		&p(\varrho)=h(\varrho)+q(\varrho), \text{ with } {q\in C_{c}^1(0,\infty)},\\
		&h\in C^1[0,\infty),\;  h(0)=0,\;h'>0,\;\text{in } (0,\infty),\;  \\
		&\lim_{\varrho\rightarrow \infty} \frac{h'(\varrho)}{\varrho^{\gamma-1}}= a>0 \text{ and } \gamma\geq 1.\\
		\end{split}
		\end{align} 
	\end{itemize}
\begin{Rem}
	The Consideration of $h$ in \eqref{p-condition} has been motivated from isentropic equation of state given by $h(\varrho)=a\varrho^\gamma$ with $\gamma\geq 1$ and $a>0$. 
\end{Rem}
 
 \begin{itemize}
 	\item Here we consider no slip boundary condition for velocity i.e.
 	\begin{align}\label{BC}
 	\vectoru|_{\{\partial\Om \times (0,T)\}=0}.
 	\end{align}
 \end{itemize}

 The compressible Navier--Stokes equations admit global--in--time weak solution(s) for general finite energy initial data and a large class of pressure--density constitutive relations. Considering $q\equiv 0$ in \eqref{p-condition} and following the literatures of Antontsev et al.\cite{AKM1983}, Lions\cite{PL1998}, Feireisl\cite{F2004b}, Plotnikov et al.\cite{PW2015} and many others, we observe global--in--time weak solution for adiabatic exponent $\gamma\geq 1$ for $d=1,2$ and $\gamma>\frac{3}{2}$ for $d=3$. {Even for non-monotone pressure, Feireisl in \cite{F2002} has proved a similar result and recent work by Bresch and Jabin \cite{BJ2018} indicates that for $p\in C^1[0,\infty)\geq0$, $p(0)=0$,  $\lim_{\varrho\rightarrow \infty} \frac{p'(\varrho)}{\varrho^{\gamma-1}}= a>0 \text{ and } \gamma\geq 2$ the system admits a weak solution.}
So it may seem unnecessary to develop the theory of measure valued solution that extends the class of generalised solutions but in the following discussion we will try to justify why we still choose to consider it.\\

The concept of measure valued solution to partial differential equation, more precisely for hyperbolic conservation law, was introduced by DiPerna in \cite{Di1985}. {The measure--valued solutions in the context of compressible Navier--Stokes solutions has been introduced by Neustupa in \cite{N1993}, drawing inspiration from Malek et al. \cite{MNRR1996}.}
 The topic has been revisited by Feireisl, Gwiazda, Swierczewska--Gwiazda and Wiedemann in \cite{FPAW2016}, where a suitable form of energy inequality have been introduced in the definition of the \emph{dissipative} measure--valued solutions.\\

 {Recently, the concept of measure--valued solutions has been studied again in the context of analysis of numerical schemes by Feireisl et al. in \cite{FL2018} and \cite{FLMS2018}. The crucial result in the aforementioned articles is the \emph{weak(measure--valued)--strong uniqueness principle} asserting that suitable (dissipative) measure valued and a strong solution starting from the same initial data necessarily coincide on the life--span of the latter. Identification of dissipative measure valued solution  as a limit of a given numerical scheme is easier and presence of weak(measure-valued)--strong uniqueness principle ensures the convergence of the scheme towards the strong solutions as long as the latter exists.}\\

 {In the work \cite{FPAW2016}, the corresponding Young measure describes oscillations of the density and velocity but handles the viscous term as a linear perturbation. In particular, the velocity gradient is not included in the Young measure. The weak--strong uniqueness principle can be established for a monotone pressure--density equation of state following 
the arguments used for the inviscid Euler system. Further their result proves existence of such a solution for any adiabatic exponent $\gamma\geq 1$ independent of dimension. }\\

{Weak--Strong uniqueness principle for monotone pressure has been proved by Feireisl et al. in \cite{FJN2012} and \cite{FNS2011} for weak solutions and in \cite{FPAW2016} for measure--valued solutions. Recently, weak--strong uniqueness principle in the class of weak solutions has been shown for the compressible Navier--Stokes system with a general non--monotone pressure density relation and/or the singular hard sphere pressure in \cite{F2018} and \cite{C2018}
. To prove the above mentioned results the key tool is the presence of viscosity. Hence this cannot be extended to an inviscid system like Euler system directly.}\\

 {To deal with the non-monotone pressure in Feireisl \cite{F2018} and \cite{C2018} the use of the renormalized version of the equation of continuity plays a crucial role. But that is \emph{non--linear} with respect to the velocity gradient {and density}. Extension of these results to the class of \emph{measure--valued} solutions therefore requires a new approach that incorporates the velocity gradient as an integral part of the associated Young measure in the spirit of B\v rezina et al. in \cite{BFN2018}.}\\
 
  It is the aim of the present paper to introduce a new concept of \emph{ renormalised dissipative measure valued}(rDMV) solutions for the compressible Navier--Stokes system that includes, in particular, the renormalized equation of continuity, and to show the weak--strong uniqueness principle in this class of a non--monotone pressure density state equation. {The plan for the paper is as follows:}
  	\begin{itemize}
  		\item {\bf Definition.} In section 2, we will introduce rDMV solutions, see \eqref{DMV defn}.
  		\item {\bf Existence.} In section 3, our goal is to show that an rDMV solution exists for any finite energy initial data, see Theorem \eqref{Exi-thm}.
  		
  		\item {\bf Weak-strong uniqueness.} In section 4, we prove that an rDMV solution coincides with the strong solution emanating from the same initial data on the life span of the latter, see Theorem \eqref{main-theorem}. 
  	\end{itemize}

\section{Definition of Measure valued solution}

Before going to our formal discussion define,
\textit{pressure potential } as :\\

\begin{itemize}
	\item When $p$ is given by \eqref{p-condition},
	\begin{align}\label{P-defn}
	\begin{split}
	&P(\varrho)= H(\varrho)+Q(\varrho) \text{ where}\\
	&H(\varrho)=\varrho \int_{1}^{\varrho} \frac{h(z)}{z^2} \; \text{d}z \text{ and }\; Q(\varrho)=\varrho \int_{1}^{\varrho} \frac{q(z)}{z^2} \; \text{d}z.
	\end{split}
	\end{align}	
	\item As a trivial consequence of above we obtain,
	\begin{align}\label{relation q and Q}
	\begin{split}
	&\varrho H^{'}(\varrho) -H(\varrho)= h(\varrho) \text{ and } \varrho H^{''} (\varrho)=h'(\varrho) \text{ for } \varrho>0,\\
	&\varrho Q^{'}(\varrho) -Q(\varrho)= q(\varrho) \text{ and } \varrho Q^{''} (\varrho)=q'(\varrho) \text{ for } \varrho>0.
	\end{split}
	\end{align}
	
\end{itemize}

\subsection{Phase Space:} We have discussed in the introduction that velocity gradient has been incorporated as a part of Young measure along with natural candidates for the phase space e.g. density and velocity $[\varrho,\vectoru]$. Hence a suitable phase space framework for the measure--valued solution is therefore
\begin{align}
\mathcal{F}=\{[s,\vectorv,\DD_{\vectorv}] \big | s\in [0,\infty),\; \vectorv\in \R^d,\; \DD_{\vectorv}\in \R^{d\times d}_{\text{sym}}\}.
\end{align}

\begin{Def}\label{DMV defn}
	We say that a parametrized measure $\{ \Nu_{t,x} \}_{(t,x)\in (0,T)\times \Omega}$,
	\begin{align*}
	\Nu \in L^{\infty}_{\text{weak}} \big( (0,T)\times \Om ;\mathcal{P}(\mathcal{F} )\big),
	\end{align*}
	is a renormalised dissipative measure--valued (rDMV) solution of Navier--Stokes system \eqref{cnse:cont}-\eqref{cauchy_str} in $(0,T)\times \Omega$, with the initial condition $\Nu_0$ and dissipation defect $\mathcal{D}$,
\begin{align*}
	\mathcal{D}\in L^{\infty}(0,T),\; \mathcal{D}\geq 0,
\end{align*}
	if the following holds.\\
	\begin{itemize}
		\item \textbf{Equation of Continuity:} For a.e. $\tau \in (0,T) $ and $\psi \in C^{1}([0,T]\times \bar{\Om})$ 
		 \begin{align} \label{mv continuity eqn}
		 \begin{split}
		 &\int_{ \Om} \langle \Nu_{\tau,x} ; s \rangle \psi(\tau, \cdot) \dx - \int_{ \Om} \langle \Nu_{0} ; s \rangle \psi(0, \cdot) \dx \\
		 &\quad = \int_{0}^{\tau} \int_{ \Om} \big[ \langle \Nu_{t,x}; s  \rangle \partial_{t}\psi + \langle \Nu_{t,x}; s \mathbf{v} \rangle \cdot \nabla_x \psi \big] \dx \dt.
		 \end{split}
		 \end{align}
	 	\item \textbf{Renormalized equation of continuty:} For a.e. $\tau \in (0,T) $ and $\psi \in C^{1}([0,T]\times \bar{\Om})$ we have
	 	\begin{align} \label{mv renorm continuity eqn}
	 	\begin{split}
	 	&\int_{ \Om} \langle \Nu_{\tau,x} ; b(s) \rangle \psi(\tau, \cdot) \dx - \int_{ \Om} \langle \Nu_{0} ; b(s) \rangle \psi(0, \cdot) \dx \\
	 	&\quad = \int_{0}^{\tau} \int_{ \Om} \big[ \langle \Nu_{t,x}; b(s)  \rangle \partial_{t}\psi + \langle \Nu_{t,x}; b(s) \mathbf{v} \rangle \cdot \nabla_x \psi \big] \dx \dt\\
	 	&\quad \quad - \int_{0}^{\tau} \int_{ \Om} \langle \Nu_{t,x} ;  (sb^\prime (s)-b(s)) \trace(\DD_{\vectorv}) \rangle \cdot \psi \dx \dt,
	 	\end{split}
	 	\end{align}
	 	where $b\in C^1[0,\infty),\; \exists r_b>0 \text{ such that }b'(x)=0,\; \forall x>r_b$. 
		\item \textbf{Momentum Equation:} 
		There exists a measure $r^{M} \in L^1([0,T];\mathcal{M}(\bar{\Om}))$ and $\xi\in L^{1}(0,T)$ such that for a.e. $\tau \in (0,T)$ and every $\vectorphi\in C^{1}([0,T]\times \bar{\Om};\R^N)$, $\vectorphi|_{\partial \Om} =0$,
		\begin{equation}
		\vert \langle r^M;\nabla_x \vectorphi \rangle \vert \leq \xi(\tau) \mathcal{D}(\tau) \Vert \vectorphi \Vert_{C^1(\bar{\Om})}
		\end{equation} 
		and
		\begin{align}\label{mv momentum eqn}
		\begin{split}
		&\int_{ \Om} \langle \Nu_{\tau,x}; s \mathbf{v} \rangle \cdot \vectorphi(\tau,\cdot) \dx - \int_{ \Om} \langle \Nu_{0} ; s\mathbf{v} \rangle \cdot \vectorphi(0,\cdot) \dx\\
		&= \int_{0}^{\tau} \int_{ \Om} \big[ \langle \Nu_{t,x}; s \mathbf{v} \rangle \cdot \partial_{t} \vectorphi +  \langle \Nu_{t,x}; s(\mathbf{v} \otimes \mathbf{v}) \rangle : \nabla_x \vectorphi + \langle \Nu_{t,x}; p(s) \rangle \Div \vectorphi  \big] \dx \dt\\
		& - \int_{0}^{\tau} \int_{ \Om} \langle \Nu_{t,x};\mathbb{S}(\DD_{\vectorv}) \rangle : \nabla_x \vectorphi \dx \dt + \int_{0}^{\tau} \langle r^M;\nabla_x \vectorphi \rangle \dt.
		\end{split}
		\end{align}
		\item \textbf{Momentum Compatibility:}
		This natural compatibility condition remains true.
		\begin{align}
		\begin{split}
		-\int_0^\tau \int_{ \Om} \langle \Nu_{t,x}; \vectorv \rangle \cdot \Div \mathbb{M} \dx \dt &= \int_{0}^\tau \int_{ \Om} \langle \Nu_{t,x}; \DD_{\vectorv} \rangle : \mathbb{M} \dx \dt \\
		&\text{ for any } \mathbb{M} \in C^1(\bar{Q}_T; \mathbb{R}^{d\times d}_{\text{sym}}).
		\end{split}
		\end{align}
		
		\item \textbf{Energy Inequality:}
		\begin{align}\label{mv energy inequality}
		\begin{split}
		\int_{ \Om} \big \langle\Nutx ; \big( \frac{1}{2}s\vert \mathbf{v} \vert^2 + P(s)\big) \big \rangle \dx &+ \int_{0}^\tau \int_{ \Om} \langle \Nu_{t,x};\mathbb{S}(\DD_{\vectorv}):\DD_{\vectorv} \rangle \dx \dt + \mathcal{D}(\tau)\\
		& \leq \int_{ \Om} \big \langle \Nu_{0} ; \big( \frac{1}{2}s\vert \mathbf{v} \vert^2 + P(s)\big) \big \rangle \dx,
		\end{split}
		\end{align}
		for a.e. $\tau \in (0,T)$.
		\item \textbf{Generalized Korn-- Poincar\'{e} inequality:}\\
		Let \begin{align}
		\mathbb{T}(A)=A+A^t-\frac{2}{d} \text{tr} (A) \mathbb{I}.
		\end{align}
		For $ \tilde{\vectoru}\in L^2(0,T;H^1_0(\Om;\R^d))$, the following inequality is true, 
		\begin{align}\label{poincare mv}
		\int_{0}^{\tau} \int_{ \Om} \langle \Nu_{t,x} ; \vert \mathbf{v} - \tilde{\vectoru} \vert^2 \rangle \dx \dt \leq c_P \int_{0}^{\tau} \int_{ \Om} \langle \Nu_{t,x} ; \vert \mathbb{T}(\DD_{\vectorv}) - \mathbb{T}(\nabla_x \tilde{\vectoru}) \vert^2 \rangle \dx \dt.
		\end{align}
	\end{itemize}
\end{Def}

 \begin{Rem}
 	Since here in this article our goal is to prove the weak-strong uniqueness, instead of considering initial condition as measure $\Nu_0$ we can directly consider \emph{finite energy initial data}. That means $(\langle \Nu_{0},s\rangle, \langle \Nu_{0};s\vectorv \rangle )=(\varrho_0, (\varrho\vectoru)_0)$ are functions with $\varrho_{0}\geq 0$, $(\varrho\vectoru)_0=0$ on the set $\{x\in \Om | \varrho_{0}(x)=0\}$ and 
 	\begin{align}\label{initial-data}
 	\int_{ \Om} \big( \frac{1}{2} \frac{ \vert (\varrho \vectoru)_{0} \vert^2}{\varrho_0} + P(\varrho_{0})  \big) (t,\cdot) \dx < \infty.
 	\end{align}
 
 \end{Rem}

\begin{Rem}
	As a consequence of the above definition, for $q \in C_c(0,\infty) $ and $ Q$ defined as in \eqref{relation q and Q} we have,
	\begin{align}
	\bigg[ \int_{ \Om} \langle \Nu_{t,x} ; Q(s) \rangle (t,\cdot) \dx\bigg]_{t=0}^{t=\tau} = -\int_{0}^{\tau} \int_{ \Om} \langle \Nu_{t,x} ; q(s) \trace(\DD_{\vectorv}) \rangle \dx \dt. 
	\end{align}
\end{Rem}

\begin{Rem}
	From the definition of $\mathbb{T}$ it follows,
	\begin{align}
	\mathbb{T}(A) : \mathbb{T}(A) = 2 \times \mathbb{T}(A):A,\; A\in \R^{d\times d}.
	\end{align} 
	
\end{Rem}

\section{Existence of solution}

From Feireisl \cite{F2002}, we have existence of weak solution for large adiabatic exponent $\gamma$ . Hence, this motivates the following approximate problem, 
\begin{align}
\partial_t \varrho + \text{div}_x (\varrho \mathbf{u})&=0, \label{cnse:cont:ap}\\
\partial_t(\varrho \mathbf{u}) + \Div (\varrho \mathbf{u} \otimes \mathbf{u})+\nabla_x p(\varrho)+\delta \nabla_x \varrho^\Gamma&=\Div \mathbb{S}(\nabla_x \mathbf{u}), \label{cnse:mom:ap}\\
u\vert_{\delomega}&=0. \label{bc:ap}
\end{align}
where $\delta>0$ is a small parameter, $\Gamma >1$ is large enough to ensure the existence of weak solution and $p$ follows \eqref{p-condition}. Further we assume that for the above mentioned problem, intial condition $\{ \varrho_{\delta,0}, (\varrho \vectoru)_{\delta,0} \}$ belongs to a certain regularity class for which weak solution exists. As an additional assuption we have,
\begin{align}
\begin{split}
\frac{1}{2} \varrho_{\delta,0} \vert \vectoru_{\delta,0} \vert^2 + P(\varrho_{\delta,0}) + \frac{\delta}{\Gamma-1} \varrho_{\delta,0}^\Gamma \rightarrow \frac{1}{2} \frac{\vert (\varrho \vectoru)_0 \vert^2 }{\varrho_0} + P(\varrho_0) \text{ in } L^1(\Om),
\end{split}
\end{align}
when $\delta \rightarrow 0$.

Thus we obtain  $$ \int_{ \Om} \big( \frac{1}{2} \varrho_{\delta,0} \vert \vectoru_{\delta,0} \vert^2 + P(\varrho_{\delta,0}) + \frac{\delta}{\Gamma-1} \varrho_{\delta,0}^\Gamma \big) (t,\cdot) \dx \leq c, $$ where $c$ is independent of $\delta$.

For each $\delta>0$, existence of weak solution $\{ \varrho_\delta, \vectoru_\delta\}_{\delta>0}$ for \eqref{cnse:cont:ap}-\eqref{cnse:mom:ap} with constitutive relation, initial and boundary condition follows directly from Feireisl \cite{F2002} for some $\Gamma\geq 2$.

Our goal is to verify that the family of weak solutions $\{ \varrho_\delta, \vectoru_\delta\}_{\delta>0}$ generates a dissipative measure-valued solution as defined in \eqref{DMV defn}.
\subsection{Apriori estimates:}
From the definition of dissipative weak solution we have the following estimates,

\begin{align}
\begin{split}
\sup_{  t\in [0,T]} \int_{ \Om} H(\varrho_\delta) (t,\cdot) \dx \leq c,\\
\sup_{  t\in [0,T]}\int_{ \Om} \varrho_\delta \vert \vectoru_\delta \vert^2 (t,\cdot)\dx \leq c,\\
\int_{0}^{T} \int_{ \Om} \mathbb{S}(\nabla_x \vectoru_\delta) : \nabla_x \vectoru_\delta \dx \dt \leq c,\\
\sup_{  t\in [0,T]} \frac{\delta}{\Gamma-1} \int_{ \Om} \varrho_\delta^{\Gamma}(t,\cdot)\dx \leq c.
\end{split}
\end{align}
\paragraph{}
By Korn inequality and Poincar\'{e} inequality we have $\vectoru_\delta$ is bounded in $L^2(0,T;W^{1,2}_{0}(\Om))$.
Further from \eqref{P-defn}, $\{\varrho_\delta \}$ is bounded in $L^{\infty}(0,T;L^{\gamma}(\Om))$ for $\gamma>1$ and $\{\varrho_\delta \log \varrho_\delta \}$ is bounded in $L^{\infty}(0,T; L^1(\Om))$ for $\gamma=1$. 

\paragraph{}
From our assumption $q\in C_c^1[0,\infty)$, we have $Q(\varrho)\approx \varrho$. Hence we can conclude that 
\begin{align}
\begin{split}
&\big[\frac{1}{2} \varrho_\delta \vert \vectoru_\delta \vert^2 + P(\varrho_\delta)\big] (t,\cdot) \in \mathcal{M}(\bar{\Om}) \text{ is bounded uniformly for }t\in (0,T),\\
&\big[ \mu \vert \nabla_x \vectoru_\delta \vert^2 + (\lambda-\frac{\mu}{d}) \vert \Div \vectoru_\delta \vert^2 \big] \text{ is bounded in }\mathcal{M}^{+}([0,T]\times \bar{\Om}),\\
&\delta \varrho_\delta^\Gamma (t,\cdot) \in \mathcal{M}^{+}(\bar{\Om}) \text{ is bounded uniformly for } t\in (0,T).
\end{split}
\end{align}

Thus passing to a subsequence, we obtain
\begin{align}
\begin{split}
&\big[\frac{1}{2} \varrho_\delta \vert \vectoru_\delta \vert^2 + P(\varrho_\delta)\big] (t,\cdot) \rightarrow E \text{ weakly-(*) in } L^{\infty}_{\text{weak}}(0,T;\mathcal{M}(\bar{\Om})),\\
&\big[ \mu \vert \nabla_x \vectoru_\delta \vert^2 + (\lambda-\frac{\mu}{d}) \vert \Div \vectoru_\delta \vert^2 \big] \rightarrow \sigma \text{ weakly-(*) in } \mathcal{M}^{+}([0,T]\times \bar{\Om}),\\
&\delta \varrho_\delta^\Gamma (t,\cdot) \rightarrow \zeta \text{ weakly-(*) in } L^{\infty}_{\text{weak}}(0,T;\mathcal{M}^{+}(\bar{\Om})).
\end{split}
\end{align}

Let $\Nu $ be a Young measure generated by $\{ \varrho_\delta, \vectoru_\delta, \mathbb{D}_{\vectoru_\delta}=\frac{\nabla_x\vectoru_\delta+ \nabla_x^{T} \vectoru_\delta}{2} \}_{\delta>0}$.\\
 Now we introduce two non-negative measures \\
 $E_{\infty}= E- \langle \Nu_{t,x}; \frac{1}{2}s \vert \vectorv \vert^2 + P(s) \rangle \dx $, $ \sigma_\infty = \sigma - \langle \Nu_{t,x}; \mathbb{S}(\DD_{\vectorv}):\DD_{\vectorv}  \rangle \dx \dt$.
 \subsection{Passage to limit}
 \subsubsection{Passage to limit in Energy inequality:}
 For the approximate problem \eqref{cnse:cont:ap}-\eqref{bc:ap} we have 
 \begin{align}
 \begin{split}
 \bigg[\int_{\Om} &\bigg( \frac{1}{2} \varrho_\delta \vert \vectoru_\delta \vert^2 +P(\varrho_\delta) + \frac{\delta}{\Gamma-1} \varrho_\delta^{\Gamma} \bigg)(t,\cdot) \dx \bigg]_{t=0}^{t=\tau}+ \int_{0}^{\tau} \int_{\Om} \mathbb{S}(\nabla_x \vectoru_\delta) :\nabla_x \vectoru_\delta \dx \dt \leq 0.\\
 \end{split}
 \end{align}
 Thus passing limit in the energy inequality, we obtain,
 \begin{align}
 \begin{split}
 &\int_{ \Om} \big \langle\Nu_{\tau,x} ; \big( \frac{1}{2}s\vert \mathbf{v} \vert^2 + P(s)\big) \big \rangle \dx + \int_{0}^\tau \int_{ \Om} \langle \Nu_{t,x};\mathbb{S}(\DD_{\vectorv}):\DD_{\vectorv} \rangle \dx \dt \\ &+ E_{\infty}(\tau)[\bar{\Om}]+ C\zeta(\tau)[\bar{\Om}] + \sigma_\infty[[0,\tau]\times \bar{\Om}]  \leq \int_{ \Om} \big \langle \Nu_{0} ; \big( \frac{1}{2}s\vert \mathbf{v} \vert^2 + P(s)\big) \big \rangle \dx,
 \end{split}
 \end{align}
  here $C>0$ fixed. Consider, 
  \begin{align}\label{defect_measure}
  \mathcal{D}(\tau)=E_{\infty}(\tau)[\bar{\Om}]+ C\zeta(\tau)[\bar{\Om}] + \sigma_\infty[[0,\tau]\times \bar{\Om}].
  \end{align}
  
  \subsubsection{Passage to limit in Renormalised Continuity equation:}
  We have,
  \begin{align}
  \begin{split}
  &\bigg[ \int_{ \Om} (\varrho_\delta+ b(\varrho_\delta)) \varphi \dx\bigg]_{t=0}^{t=\tau} \\
  &= 
  \int_0^{\tau} \int_{\Om} [ (\varrho_\delta+b(\varrho_\delta)) \partial_t \varphi + (\varrho_\delta+ b(\varrho_\delta)) \vectoru_\delta \cdot \nabla_x \varphi+ (b(\varrho_\delta)-\varrho_\delta b'(\varrho_\delta)\Div \vectoru_\delta \varphi] \dx \dt ,
  \end{split}
  \end{align} 
  where, $b\in C^1[0,\infty),\; \exists r_b>0 \text{ such that }b'(x)=0,\; \forall x>r_b$.
  This choice of $b$ implies that,
  \begin{align}
  \begin{split}
  (b(\varrho_\delta)-\varrho_\delta b'(\varrho_\delta)\Div \vectoru_\delta \in L^1((0,T)\times \Om)\text{ is uniformly bounded. }
  \end{split}
  \end{align} 
  
  Hence we obtain,
  \begin{align} 
  \begin{split}
  &\int_{ \Om} \langle \Nu_{\tau,x} ; s \rangle \psi(\tau, \cdot) \dx - \int_{ \Om} \langle \Nu_{0} ; s \rangle \psi(0, \cdot) \dx \\
  &\quad = \int_{0}^{\tau} \int_{ \Om} \big[ \langle \Nu_{t,x}; s  \rangle \partial_{t}\psi + \langle \Nu_{t,x}; s \mathbf{v} \rangle \cdot \nabla_x \psi \big] \dx \dt,
  \end{split}
  \end{align}
  and 
  	\begin{align}
  \begin{split}
  &\int_{ \Om} \langle \Nu_{\tau,x} ; b(s) \rangle \psi(\tau, \cdot) \dx - \int_{ \Om} \langle \Nu_{0} ; b(s) \rangle \psi(0, \cdot) \dx \\
  &\quad = \int_{0}^{\tau} \int_{ \Om} \big[ \langle \Nu_{t,x}; b(s)  \rangle \partial_{t}\psi + \langle \Nu_{t,x}; b(s) \mathbf{v} \rangle \cdot \nabla_x \psi \big] \dx \dt\\
  &\quad \quad - \int_{0}^{\tau} \int_{ \Om} \langle \Nu_{t,x} ;  (sb^\prime (s)-b(s)) \trace(\DD_{\vectorv}) \rangle \cdot \psi \dx \dt.
  \end{split}
  \end{align}
  
  \subsubsection{Passage to limit in Momentum equation:}
  We have,
  \begin{align}
  \begin{split}
  &\bigg[\int_{\Om} \varrho_\delta \vectoru_\delta(\tau,\cdot)\cdot \vectorphi(\tau,\cdot) \dx\bigg]_{t=0}^{t=\tau} \\
  &=\int_0^{\tau}\int_{\Om} [\varrho_\delta \vectoru_\delta \cdot \partial_{t} \vectorphi + (\varrho_\delta \vectoru_\delta \otimes \vectoru_\delta : \nabla_x \vectorphi + (p(\varrho_\delta) + \delta \varrho_\delta^\Gamma) \Div \vectorphi -\mathbb{S}(\nabla_x \vectoru_\delta):\nabla_x \vectorphi] \dx \dt .\\		
  \end{split}
  \end{align}
   Using $\varrho_\delta u_{\delta,i} u_{\delta,j} \leq \varrho_\delta \vert \vectoru_\delta \vert^2$, $p(\varrho_\delta) \lessapprox P(\varrho_\delta)$ and Lemma 2.1 from Feireisl et al. \cite{FPAW2016} we obtain
   \begin{align}
   \begin{split}
   &\int_{ \Om} \langle \Nu_{\tau,x}; s \mathbf{v} \rangle \cdot \vectorphi(\tau,\cdot) \dx - \int_{ \Om} \langle \Nu_{0} ; s\mathbf{v} \rangle \cdot \vectorphi(0,\cdot) \dx\\
   &= \int_{0}^{\tau} \int_{ \Om} \big[ \langle \Nu_{t,x}; s \mathbf{v} \rangle \cdot \partial_{t} \vectorphi +  \langle \Nu_{t,x}; s(\mathbf{v} \otimes \mathbf{v}) \rangle : \nabla_x \vectorphi + \langle \Nu_{t,x}; p(s) \rangle \Div \vectorphi  \big] \dx \dt\\
   & - \int_{0}^{\tau} \int_{ \Om} \langle \Nu_{t,x};\mathbb{S}(\DD_{\vectorv}) \rangle : \nabla_x \vectorphi \dx \dt + \int_{0}^{\tau} \langle r^M;\nabla_x \vectorphi \rangle \dt+ \int_{0}^{\tau} \langle r^L;\Div \vectorphi \rangle \dt.
   \end{split}
   \end{align}
    Here, $r^M=\{r^M_{i,j}\}_{i,j=1}^{d}, r^M_{i,j}\in L^{\infty}_{{\text{weak}}}(0,T;\mathcal{M}(\bar{\Om}))$ and $r^L\in L^{\infty}_{{\text{weak}}}(0,T;\mathcal{M}(\bar{\Om}))$ such that $$ \vert r^M_{i,j}(\tau) \vert \leq E_{\infty}(\tau) \text{ and } \vert r^L(\tau) \vert \leq \zeta (\tau).$$
    
    $r^M$ and $r^L$ contain the concentration defect of the terms $\varrho_\delta \vectoru_\delta \otimes \vectoru_\delta$, $p(\varrho_\delta)$ and $ \delta \varrho_\delta^\Gamma $. By virtue of \eqref{defect_measure},  these are controlled by $\mathcal{D}$.
   \subsubsection{Verification of Momentum compatibility:}
   Since $\vectoru_\delta$ is bounded in $L^2(0,T;W^{1,2}_{0}(\Om))$, in this case we can check the relation easily.
   
   \subsubsection{Verification of Generalized Korn-- Poincar\'{e} inequality:}
   It can be proved along similar line as in Br\v ezina et al. \cite{BFN2018}.\\
  \subsection{Main Theorem:}
  We conclude this section with the following theorem,  
   \begin{Th}\label{Exi-thm}
   	Suppose $\Om$ is a regular bounded domain in $\mathbb{R}^d$ with $d=1,2,3$ and suppose the pressure
   	satisfies \eqref{p-condition}. If $(\varrho_0, (\varrho \vectoru)_0)$ satisfies \eqref{initial-data}, then there exists a dissipative measure-valued solution as defined in \eqref{DMV defn} with initial data $\Nu_{0} = \delta_{\{ \varrho_0, (\varrho \vectoru)_0 \}}$.
   \end{Th}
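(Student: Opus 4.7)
The plan is to realize the rDMV solution as the limit of weak solutions to an artificial-pressure regularization. First, for each $\delta>0$ and $\Gamma\geq 2$ sufficiently large I would invoke Feireisl \cite{F2002} to obtain a global weak solution $(\varrho_\delta,\vectoru_\delta)$ of \eqref{cnse:cont:ap}--\eqref{bc:ap} starting from suitably regularized initial data whose modified energy converges in $L^1(\Om)$ to $\tfrac12|(\varrho\vectoru)_0|^2/\varrho_0+P(\varrho_0)$. The approximate energy inequality together with Korn's and Poincar\'e's inequalities gives the uniform bounds listed in the excerpt: $\varrho_\delta\in L^\infty(0,T;L^\gamma(\Om))$ (or $\varrho_\delta\log\varrho_\delta\in L^\infty(0,T;L^1(\Om))$ when $\gamma=1$), $\sqrt{\varrho_\delta}\vectoru_\delta\in L^\infty(0,T;L^2(\Om))$, $\vectoru_\delta\in L^2(0,T;W^{1,2}_0(\Om))$, and $\delta\varrho_\delta^\Gamma\in L^\infty(0,T;L^1(\Om))$. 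Along a non-relabelled subsequence I then extract a Young measure $\Nu_{t,x}$ generated by the triple $\{\varrho_\delta,\vectoru_\delta,\DD_{\vectoru_\delta}\}$ --- the inclusion of the symmetric gradient in the phase space being the novelty inspired by \cite{BFN2018} --- together with the weak-$\ast$ limits $E,\sigma,\zeta$ of the total energy density, viscous dissipation, and artificial pressure. Splitting off the Young-measure barycenters defines the residual concentration measures $E_\infty$, $\sigma_\infty$, and sets the dissipation defect $\mathcal{D}(\tau)=E_\infty(\tau)[\bar\Om]+C\zeta(\tau)[\bar\Om]+\sigma_\infty[[0,\tau]\times\bar\Om]$ for a fixed $C>0$.

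Second, each item of Definition \ref{DMV defn} is verified by passage to the limit. The continuity equation is immediate since $\varrho_\delta$ and $\varrho_\delta\vectoru_\delta$ converge weakly to their Young-measure barycenters. For the renormalized equation, the key point is that $b'$ is compactly supported, so $b(\varrho_\delta)$ and $\varrho_\delta b'(\varrho_\delta)-b(\varrho_\delta)$ are bounded continuous composites of $\varrho_\delta$; the extra term $(\varrho_\delta b'(\varrho_\delta)-b(\varrho_\delta))\Div\vectoru_\delta$ is bounded in $L^2((0,T)\times\Om)$, and its limit is identified via $\Nu_{t,x}$ since this Young measure was built to capture joint oscillations of $\varrho_\delta$ and $\nabla_x\vectoru_\delta$. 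In the momentum equation, the concentration defects produced by $\varrho_\delta\vectoru_\delta\otimes\vectoru_\delta$, $p(\varrho_\delta)$, and $\delta\varrho_\delta^\Gamma$ yield measures $r^M$ and $r^L$ controlled respectively by $E_\infty$ and $\zeta$ through the argument of Lemma~2.1 in \cite{FPAW2016}; both are then dominated by $\mathcal{D}$. The energy inequality passes by weak-$\ast$ lower semicontinuity, with $\mathcal{D}$ absorbing exactly the residual mass. Momentum compatibility is verified using the weak convergence $\vectoru_\delta\rightharpoonup\bar\vectoru$ in $L^2(0,T;W^{1,2}_0(\Om))$, and the generalized Korn--Poincar\'e inequality is inherited from the $\delta$-level along the same lines as in \cite{BFN2018}.

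The most delicate step I expect is coupling the gradient-component of the Young measure to the barycentric velocity: a priori, $\Nu_{t,x}$ is free to oscillate in the $\DD_\vectorv$-variable, whereas momentum compatibility requires its $\DD_\vectorv$-marginal to equal $\mathbb{D}_{\langle\Nu_{t,x};\vectorv\rangle}$ in the sense of distributions. This is resolved precisely because $\vectoru_\delta$ is bounded in a reflexive Sobolev space, so $\DD_{\vectoru_\delta}$ admits no concentrations, only oscillations, which are by construction encoded in $\Nu_{t,x}$. A secondary subtlety is the non-monotone contribution $q(\varrho_\delta)$: since $q\in C^1_c(0,\infty)$, the mapping $\varrho\mapsto q(\varrho)$ is bounded and continuous on $[0,\infty)$, so its weak-$\ast$ limit is directly the barycenter $\langle\Nu_{t,x};q(s)\rangle$ without further equi-integrability work, and the bound $Q(\varrho_\delta)\lesssim\varrho_\delta$ ensures no loss of energy control. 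Once these points are settled, assembling the pieces yields the parametrized measure, the defect $\mathcal{D}$, and the measures $r^M$, $r^L$ satisfying all clauses of Definition \ref{DMV defn} with $\Nu_0=\delta_{\{\varrho_0,(\varrho\vectoru)_0\}}$, which proves Theorem \ref{Exi-thm}.
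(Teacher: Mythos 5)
Your proposal is correct and follows essentially the same route as the paper: artificial-pressure regularization via \cite{F2002}, generation of a Young measure by the triple $\{\varrho_\delta,\vectoru_\delta,\DD_{\vectoru_\delta}\}$, definition of $\mathcal{D}$ from the concentration defects $E_\infty$, $\zeta$, $\sigma_\infty$, and control of $r^M$, $r^L$ by Lemma~2.1 of \cite{FPAW2016}. The only (harmless) deviation is that you record the renormalization term $(\varrho_\delta b'(\varrho_\delta)-b(\varrho_\delta))\Div\vectoru_\delta$ as bounded in $L^2$ rather than merely uniformly bounded in $L^1$ as the paper states, which is in fact the sharper and more useful observation for identifying its Young-measure limit.
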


\section{Relative Energy and Weak-Strong Uniqueness}
Relative energy was first introduced by Dafermos in \cite{D1979} in the context of hyperbolic conservation laws. In the context of compressible Navier--Stokes it had been introduced by Feireisl, Jin, Novotn\'y and Sun in \cite{FJN2012} and \cite{FNS2011}. Motivated from the relative energy mentioned in those articles for weak solutions to barotropic Navier-Stokes system, i.e.
\begin{align}\label{rel ent weak}
\Epsilon(t)=\Epsilon(\varrho,\vectoru \vert r,\vectorU)(t):= \int_{\Omega}\frac{1	}{2} \varrho \vert \vectoru-\vectorU\vert^2 + (H(\varrho)-H(r) -H^{'}(r)(\varrho -r)) (t,\cdot) \dx ,
\end{align} 
we define,
\begin{align}\label{rel ent mv}
\mathcal{E}_{mv}(\varrho,\vectoru \vert r,\vectorU)(t):= \int_{ \Om} \bigg[ \big \langle \Nu_{t,x}; \frac{1}{2}s\vert \mathbf{v}- \vectorU \vert^2 + H(s)-H(r)-H'(r)(s-r)\big \rangle\bigg] \dx,
\end{align}
where $r,\vectorU$ are arbitrary test functions and $\{\varrho,\vectoru\}$ in \eqref{rel ent weak} is weak solution of \eqref{cnse:cont}-\eqref{cauchy_str}, while in \eqref{rel ent mv} $\Nu$ is a solution as defined in \eqref{DMV defn}.
 
\begin{Lemma}
	Let $( \Nu, \mathcal{D})$ be a measure valued solution of \eqref{cnse:cont}-\eqref{cauchy_str} for initial data $\Nu_{0}$ by definition \eqref{DMV defn}. Then for smooth $r,\vectorU$, we have the following relative energy inequality:
	\begin{align}\label{re1}
	\begin{split}
	\mathcal{E}_{mv}(\tau) + & \int_{0}^\tau \int_{ \Om} \langle \Nu_{t,x};\mathbb{S}(\DD_{\vectorv})  : \DD_{\vectorv} \rangle \dx \dt - \int_{0}^\tau \int_{ \Om} \langle \Nu_{t,x};\mathbb{S}(\DD_{\vectorv}) \rangle : \nabla_x \vectorU \dx \dt + \mathcal{D}(\tau) \\
	&\leq \int_{ \Om} \bigg[ \big \langle \Nu_{0,x}; \frac{1}{2}s\vert \mathbf{v}- \vectorU_0 \vert^2 + H(s)-H(r_0)-H'(r_0)(s-r_0)\big \rangle\bigg] \dx\\
	& - \int_0^{\tau} \int_{ \Om} \langle \Nu_{\tau,x}; s \vectorv \rangle \cdot \partial_{t}\vectorU \dx \dt\\
	&- \int_0^{\tau} \int_{ \Om} [\langle \Nu_{t,x}; s\vectorv \otimes \vectorv \rangle : \nabla_x \vectorU + \langle \Nu_{t,x} ; h(s)\rangle\Div\vectorU ] \dx \dt\\
	&+\int_0^{\tau} \int_{ \Om} [\langle \Nu_{t,x};s\rangle \vectorU \cdot \partial_{t} \vectorU + \langle \Nu_{t,x} ; s\vectorv \rangle \cdot (\vectorU \cdot \nabla_x)\vectorU] \dx \dt\\
	&+\int_{0}^{\tau} \int_{ \Om}\bigg[ \big\langle \Nu_{t,x};(1-\frac{s}{r}) \big \rangle h'(r) \partial_t r - \langle \Nu_{t,x}; s\vectorv \rangle \cdot \frac{h'(r)}{r} \nabla_x r \bigg] \dx \dt\\
	&-\int_{0}^{\tau} \int_{ \Om} \langle \Nu_{t,x} ; q(s)\rangle \Div \vectorU \dx \dt+\int_{0}^{\tau} \int_{ \Om} \langle \Nu_{t,x} ; q(s) \trace (\DD_{\vectorv})\rangle \dx \dt\\
	& -\int^{\tau}_{0} \langle r^M ; \nabla_x \vectorU \rangle \dt,
	\end{split}
	\end{align}
	here, $\vectorU_0(x)=\vectorU(0,x)$ and $r_0(x)=r(0,x)$ for $x\in \Om$.
\end{Lemma}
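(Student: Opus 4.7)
The plan is to derive \eqref{re1} by rewriting $\mathcal{E}_{mv}(\tau)$ as a linear combination of quantities computable from the DMV formulation, and then subtracting from the energy inequality \eqref{mv energy inequality} the identities obtained by testing \eqref{mv continuity eqn} and \eqref{mv momentum eqn} against smooth test functions built from $(r,\vectorU)$. Expanding $\tfrac{1}{2}s|\vectorv-\vectorU|^2$ and using $rH'(r)-H(r)=h(r)$ from \eqref{relation q and Q} gives
\[
\mathcal{E}_{mv}(\tau) = \int_{\Om}\langle \Nu_{\tau,x};\tfrac{1}{2}s|\vectorv|^2 + H(s)\rangle\dx - \int_{\Om}\langle \Nu_{\tau,x}; s\vectorv\rangle\cdot\vectorU\dx + \tfrac{1}{2}\int_{\Om}\langle \Nu_{\tau,x}; s\rangle|\vectorU|^2\dx - \int_{\Om} H'(r)\langle \Nu_{\tau,x}; s\rangle\dx + \int_{\Om} h(r)\dx,
\]
so the task reduces to computing each of these five pieces (between times $0$ and $\tau$) via the measure--valued identities.

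\textbf{Main steps.} First, I split $P(s)=H(s)+Q(s)$ in \eqref{mv energy inequality}; the remark following Definition \ref{DMV defn} (an instance of \eqref{mv renorm continuity eqn} with $b=Q$ and $\psi=1$) converts the $Q$-boundary terms into $-\int_{0}^{\tau}\int_{\Om}\langle\Nu_{t,x}; q(s)\trace(\DD_\vectorv)\rangle\dx\dt$, matching the penultimate line of \eqref{re1}. Next, I test \eqref{mv momentum eqn} with $\vectorphi=\vectorU$, admissible since $\vectorU\in C^1([0,T]\times\bar\Om)$ with $\vectorU|_{\partial\Om}=0$; this evaluates $\bigl[\int_{\Om}\langle \Nu_{t,x}; s\vectorv\rangle\cdot\vectorU\dx\bigr]_{0}^{\tau}$ and contributes the $\partial_t\vectorU$ term, the convective flux $\langle\Nu; s\vectorv\otimes\vectorv\rangle:\nabla_x\vectorU$, the pressure term $\langle\Nu; p(s)\rangle\Div\vectorU$ (split via $p=h+q$ to populate both $\Div\vectorU$ lines of \eqref{re1}), the viscous flux $\langle\Nu;\mathbb{S}(\DD_\vectorv)\rangle:\nabla_x\vectorU$ (moved to the LHS to form the stated viscous difference), and the $r^M$ residual. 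Then I test \eqref{mv continuity eqn} with $\psi=\tfrac{1}{2}|\vectorU|^2$ to evaluate the kinetic piece $\bigl[\tfrac{1}{2}\int_{\Om}\langle\Nu_{t,x};s\rangle|\vectorU|^2\dx\bigr]_{0}^{\tau}$; the resulting time-derivative term gives $\langle\Nu;s\rangle\vectorU\cdot\partial_t\vectorU$, and the spatial part recombines with the momentum-step convection to yield the form $\langle\Nu; s\vectorv\rangle\cdot(\vectorU\cdot\nabla_x)\vectorU$ displayed in \eqref{re1}. Finally, I test \eqref{mv continuity eqn} with $\psi=H'(r)$ (smooth since $r$ is smooth and strictly positive on $[0,T]\times\bar\Om$) and add the elementary chain-rule identity $\bigl[\int_{\Om} h(r)\dx\bigr]_{0}^{\tau} = \int_{0}^{\tau}\int_{\Om} h'(r)\partial_t r\dx\dt$; invoking $rH''(r)=h'(r)$ from \eqref{relation q and Q}, the combination reassembles into the closing line of \eqref{re1}, namely
\[
\int_{0}^{\tau}\int_{\Om}\Bigl[\bigl\langle\Nu_{t,x};\,1-\tfrac{s}{r}\bigr\rangle\, h'(r)\,\partial_t r \;-\; \langle\Nu_{t,x}; s\vectorv\rangle\cdot\tfrac{h'(r)}{r}\,\nabla_x r\Bigr]\dx\dt.
\]

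\textbf{Expected obstacle.} The principal technical difficulty is bookkeeping: matching the precise combination of convective structures in \eqref{re1}---in particular ensuring that $\langle\Nu; s\vectorv\otimes\vectorv\rangle:\nabla_x\vectorU$ and $\langle\Nu; s\vectorv\rangle\cdot(\vectorU\cdot\nabla_x)\vectorU$ appear with the correct signs and no stray remainder---requires careful tracking of indices and sign conventions through the momentum and continuity tests, as well as combining them with the energy and renormalised identities in the correct order. A secondary point is the admissibility of $\psi=H'(r)$ in \eqref{mv continuity eqn}: this is unproblematic provided $r$ stays uniformly positive and bounded on $[0,T]\times\bar\Om$, a standing hypothesis on the strong--solution side of any weak--strong uniqueness argument.
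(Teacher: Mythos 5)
Your proposal follows essentially the same route as the paper's (very terse) proof: the identical five-term decomposition of $\mathcal{E}_{mv}$, with the energy inequality \eqref{mv energy inequality} handling the first term, the momentum equation tested with $\vectorphi=\vectorU$ the second, the continuity equation tested with $\psi=\tfrac12|\vectorU|^2$ and $\psi=H'(r)$ the third and fourth, and smoothness of $r$ the fifth; you even make explicit the $Q$--renormalization step producing the $\langle \Nu_{t,x};q(s)\trace(\DD_{\vectorv})\rangle$ term, which the paper's sketch leaves implicit. One small notational caveat: the continuity test with $\psi=\tfrac12|\vectorU|^2$ yields $\langle\Nu_{t,x};s\vectorv\rangle\cdot\nabla_x\bigl(\tfrac12|\vectorU|^2\bigr)=\langle\Nu_{t,x};s\,(\vectorv\cdot\nabla_x)\vectorU\cdot\vectorU\rangle$, which differs from the literal $\langle\Nu_{t,x};s\vectorv\rangle\cdot(\vectorU\cdot\nabla_x)\vectorU$ by the antisymmetric part of $\nabla_x\vectorU$ and is in fact the version needed to pass to \eqref{re3}, so your ``recombination'' should be understood as flagging the paper's abuse of notation rather than as a pointwise identity.
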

\begin{proof}
	By direct calculation we can show that,
	\begin{align}
	\begin{split}
	\Epsilon_{mv}(\tau) &=\int_{ \Om} \bigg\langle \Nu_{\tau,x} ; \frac{1}{2}s\vert \vectorv \vert ^2 + H(s)\bigg\rangle \dx - \int_{ \Om} \langle \Nu_{\tau,x}; s \vectorv \rangle \cdot \vectorU \dx \\
	& \quad +\int_{ \Om} \frac{1}{2} \langle \Nu_{\tau,x}; s \rangle \vert \vectorU \vert^2 \dx - \int_{ \Om}\langle\Nu_{\tau,x};s \rangle H'(r) \dx + \int_{ \Om} h(r) \dx = \Sigma_{i=1}^{5} K_i.
	\end{split}
	\end{align}
	
	Now we look for the terms $K_i$ for $i=1(1)5$. We have some bound for $K_1$ from \eqref{mv energy inequality}. To estimate $K_2$ we use \eqref{mv momentum eqn} and for $K_3,K_4$ we use \eqref{mv continuity eqn}. Calculating these terms we will obtain the desired result.
\end{proof}

\subsection{Main Theorem:}
We state the main theorem
\begin{Th}\label{main-theorem}
	Let $\Om \subset \R^d,\; d=1,2,3$ be a smooth bounded domain. Suppose the pressure $p$ satisfies \eqref{p-condition}. Let $\{ \Nu_{t,x}, \mathcal{D} \}$ be a dissipative measure-valued solution to the barotropic Navier-Stokes system \eqref{cnse:cont}-\eqref{cauchy_str} in $(0,T)\times \Omega$, with initial state represented by $\Nu_{0}$, in the sense specified in Definition \eqref{DMV defn}. Let $\{r,U\}$ be a strong solution to \eqref{cnse:cont}-\eqref{cauchy_str} in $(0,T)\times \Omega$ belonging to the class
	\begin{align}\label{reg_class_str_sol}
	r,\;\nabla_x r,\; \vectorU,\; \nabla_x \vectorU \in C([0,T]\times \bar{\Om}),\; \partial_{t} \vectorU \in L^2(0,T; C(\bar{\Om};\R^d)),\; r>0,\; \vectorU|_{\partial\Om}=0.
	\end{align}
	Then there is a constant $\Lambda=\Lambda(T)$, depending only the norms of $r,\;r^{-1},\; \vectorU,\;$ and $\xi$ in the aforementioned spaces, such that
	
	\begin{align}
	\begin{split}
	\int_{ \Om}& \bigg[ \big \langle \Nu_{\tau,x}; \frac{1}{2}s\vert \mathbf{v}- \vectorU \vert^2 + H(s)-H(r)-H'(r)(s-r)\big \rangle\bigg] \dx + \mathcal{D}(\tau)\\
	&\;\leq \Lambda(T) \int_{ \Om}  \bigg[ \big \langle \Nu_{0,x}; \frac{1}{2}s\vert \mathbf{v}- \vectorU(0,\cdot) \vert^2 + H(s)-H(r(0,\cdot))-H'(r_0)(s-r(0,\cdot))\big \rangle\bigg] \dx,
	\end{split}
	\end{align}
	
	for a.e. $\tau \in (0,T)$. In particular, if the initial states coincide, i.e.
	\begin{align*}
	\Nu_{0,x}=\delta_{\{r(0,x),U(0,x)\}}, \;\text{for a.e. }x\in \Om
	\end{align*}
	then $\mathcal{D}=0$, and
	\begin{align*}
	\Nu_{\tau,x}= \delta_{\{r(\tau,x),U(\tau,x)\}} \;\text{for a.e. }\tau 
	\in(0,T)\;\text{for a.e. }x\in \Om.
	\end{align*}
\end{Th}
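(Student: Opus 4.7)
The plan is to apply the relative energy inequality \eqref{re1} with the smooth pair $(r,\vectorU)$ taken as the test functions, and then close the resulting estimate by Gronwall's lemma. The first task is to exploit the fact that $(r,\vectorU)$ satisfies \eqref{cnse:cont}--\eqref{cnse:mom} pointwise: the strong momentum equation $r\partial_t\vectorU + r(\vectorU\cdot\nabla_x)\vectorU + \nabla_x h(r) + \nabla_x q(r) = \Div\mathbb{S}(\nabla_x\vectorU)$ lets me eliminate $\partial_t\vectorU$ and $(\vectorU\cdot\nabla_x)\vectorU$ on the right-hand side of \eqref{re1}, and $\partial_t r=-\Div(r\vectorU)$ handles the $\partial_t r$ contribution. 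After the usual algebraic cancellations the convective block collapses into $-\int_0^\tau\!\int_\Omega\langle\Nutx;s(\vectorv-\vectorU)\otimes(\vectorv-\vectorU)\rangle:\nabla_x\vectorU\,\dx\dt$, controlled by $\|\nabla_x\vectorU\|_{L^\infty}\mathcal{E}_{mv}$. The viscous terms on the left-hand side, combined with the momentum-compatibility identity, assemble into the non-negative quadratic form $\int\langle\Nutx;\mathbb{S}(\DD_\vectorv-\nabla_x\vectorU):(\DD_\vectorv-\nabla_x\vectorU)\rangle$, whose Korn--Poincar\'e counterpart \eqref{poincare mv} dominates $\int\langle\Nutx;|\vectorv-\vectorU|^2\rangle$ and provides a dissipative reserve for absorbing cross terms. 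The monotone pressure contributions reduce, via $\varrho H''(\varrho)=h'(\varrho)$ and a Taylor expansion, to $\int[h(s)-h(r)-h'(r)(s-r)]\Div\vectorU$, bounded by $\|\Div\vectorU\|_{L^\infty}\mathcal{E}_{mv}$, while the concentration defect is handled directly by $|\langle r^M;\nabla_x\vectorU\rangle|\leq\xi(t)\mathcal{D}(t)\|\vectorU\|_{C^1}$.

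\textbf{Main obstacle.} The principal difficulty is the non-monotone piece $q$, which is invisible to the $H$-relative entropy. After inserting the strong solution, using both \eqref{mv continuity eqn} and its renormalized counterpart \eqref{mv renorm continuity eqn} applied with a cutoff $b$ approximating $Q$, and pairing with the $\nabla_x q(r)$ term coming from the strong momentum equation, the net $q$-contribution regroups schematically as
\[
\int_0^\tau\!\int_\Omega\langle\Nutx;(q(s)-q(r))(\trace(\DD_\vectorv)-\Div\vectorU)\rangle\,\dx\dt\;+\;\text{lower order}.
\]
Since $q\in C^1_c(0,\infty)$, the factor $q(s)-q(r)$ is bounded and Lipschitz. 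Following the essential/residual bookkeeping of Feireisl \cite{F2018} and Chaudhuri \cite{C2018}, I partition the density range into an essential zone, where $s$ remains in a fixed neighbourhood of $r(t,x)$ and $|q(s)-q(r)|^2\leq L^2|s-r|^2$ is absorbed by $H(s)-H(r)-H'(r)(s-r)$ through convexity of $H$, and a residual zone, where $|q(s)-q(r)|$ is merely bounded while $H(s)-H(r)-H'(r)(s-r)$ grows like $1+s^\gamma$ and dominates it. A Cauchy--Schwarz against $\DD_\vectorv-\nabla_x\vectorU$ with a small parameter then absorbs a fraction of the viscous quadratic form produced in the first step, leaving half of the dissipation on the left-hand side for the Gronwall argument.

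\textbf{Closing.} Collecting all of these estimates yields
\[
\mathcal{E}_{mv}(\tau)+\tfrac12\mathcal{D}(\tau)\leq \mathcal{E}_{mv}(0)+\int_0^\tau\Lambda_0(t)\bigl(\mathcal{E}_{mv}(t)+\mathcal{D}(t)\bigr)\,\dt,
\]
with $\Lambda_0\in L^1(0,T)$ depending only on the norms of $r,\,r^{-1},\,\nabla_x r,\,\vectorU,\,\nabla_x\vectorU,\,\partial_t\vectorU$ and $\xi$ listed in \eqref{reg_class_str_sol}. Gronwall's lemma delivers the claimed bound with $\Lambda(T)=\exp\!\bigl(\int_0^T\Lambda_0\bigr)$. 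When the initial states coincide, $\mathcal{E}_{mv}(0)=0$ forces $\mathcal{E}_{mv}\equiv 0$ and $\mathcal{D}\equiv 0$, whence the strict convexity of $H$ on the support of $\Nutx$ combined with \eqref{poincare mv} forces $\Nu_{\tau,x}=\delta_{\{r(\tau,x),\vectorU(\tau,x)\}}$ for almost every $(\tau,x)\in(0,T)\times\Omega$.
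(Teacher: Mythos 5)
Your proposal is correct and follows essentially the same route as the paper: the relative energy inequality \eqref{re1} tested with $(r,\vectorU)$, the reorganization of the non-monotone contribution via the renormalized continuity equation into the term $\langle\Nutx;(q(s)-q(r))(\trace(\DD_{\vectorv})-\Div\vectorU)\rangle$, the essential/residual splitting of the density range with absorption of the cross terms into the viscous quadratic form through the generalized Korn--Poincar\'e inequality \eqref{poincare mv}, and Gronwall's lemma. The only cosmetic difference is that the paper absorbs the $q$-term specifically into the $\lambda\vert\trace(\DD_{\vectorv})-\Div\vectorU\vert^2$ part of the dissipation after splitting $\mathbb{S}$ via $\mathbb{T}$, whereas you quote the full form $\mathbb{S}(\DD_{\vectorv}-\nabla_x\vectorU):(\DD_{\vectorv}-\nabla_x\vectorU)$; both work.
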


%\lessapprox%	

From now on our goal is to prove the aforementioned theorem. We assume $\{r,\vectorU\}$ solves \eqref{cnse:cont}-\eqref{cauchy_str} and belongs to regularity class \eqref{reg_class_str_sol}. Further to simplify calculation, we assume 
\begin{align}\label{simple reg class}
\partial_{t} \vectorU \in C(\bar{Q}_T;R^d).
\end{align}

 Then we rewrite \eqref{re1} as,

\begin{align}\label{re3}
\begin{split}
\mathcal{E}_{mv}(\tau) + & \int_{0}^\tau \int_{ \Om} \langle \Nu_{t,x};\mathbb{S}(\DD_{\vectorv}) : \DD_{\vectorv} \rangle \dx \dt + \mathcal{D}(\tau) \\
&- \int_{0}^\tau \int_{ \Om} \langle \Nu_{t,x};\mathbb{S}(\DD_{\vectorv}) \rangle : \nabla_x \vectorU \dx \dt \\
&-\int_{0}^\tau \int_{ \Om} \mathbb{S}(\nabla_x \vectorU) : \langle \Nu_{t,x};( \DD_{\vectorv} - \nabla_x \vectorU )\rangle \dx \dt \\
& \leq \int_{ \Om} \bigg[ \big \langle \Nu_{0,x}; \frac{1}{2}s\vert \mathbf{v}- \vectorU_0 \vert^2 + H(s)-H(r_0)-H'(r_0)(s-r_0)\big \rangle\bigg] \dx\\
& + \int_0^{\tau} \int_{ \Om} \langle \Nu_{t,x}; (s(\vectorv-\vectorU)\cdot \nabla_x )\vectorU \cdot (\vectorU-\vectorv) \rangle \dx \dt\\
&  + \int_0^{\tau} \int_{ \Om} \langle \Nu_{t,x} ; (s-r)(\vectorU - \vectorv) \rangle \cdot \frac{1}{r}\big( \Div \mathbb{S}(\nabla_x \vectorU)- \nabla_x q(r) \big) \dx \dt  \\
&  +  \int_0^{\tau} \int_{ \Om} \langle \Nu_{t,x} ; (- h(s) +h(r)+ h'(r)(s -r)) \rangle \; \Div \vectorU\;   \dx \dt\\
&  + \int_0^{\tau} \int_{ \Om} \big\langle \Nu_{t,x} ; \big(q(s)-q(r)\big )(\trace(\DD_{\vectorv})- \Div \vectorU)  \big\rangle \dx \dt\\
&  + \Vert \vectorU \Vert_{C^1([0,T]\times \bar{\Om}; \R^{N})} \int^{\tau}_{0}\xi(t) \mathcal{D}(t) \dt.
\end{split} 
\end{align}
We have,

\begin{align}\label{re4}
\begin{split}
\mathcal{E}_{mv}(\tau) + & \int_{0}^\tau \int_{ \Om} \langle \Nu_{t,x};\mathbb{S}(\DD_{\vectorv}-\nabla_x \vectorU) : \DD_{\vectorv} - \nabla_x \vectorU  \rangle \dx \dt + \mathcal{D}(\tau) \\
&\leq  \int_{ \Om} \bigg[ \big \langle \Nu_{0,x}; \frac{1}{2}s\vert \mathbf{v}- \vectorU_0 \vert^2 + H(s)-H(r_0)-H'(r_0)(s-r_0)\big \rangle\bigg] \dx\\
& + \int_0^{\tau} \int_{ \Om} \langle \Nu_{t,x}; (s(\vectorv-\vectorU)\cdot \nabla_x )\vectorU \cdot (\vectorU-\vectorv) \rangle \dx \dt\\
&  + \int_0^{\tau} \int_{ \Om} \langle \Nu_{t,x} ; (s-r)(\vectorU - \vectorv) \rangle \cdot \frac{1}{r}\big( \Div \mathbb{S}(\nabla_x \vectorU)- \nabla_x q(r) \big) \dx \dt  \\
&  +  \int_0^{\tau} \int_{ \Om} \langle \Nu_{t,x} ; (- h(s) +h(r)+ h'(r)(s -r)) \rangle \; \Div \vectorU\;   \dx \dt\\
&  + \int_0^{\tau} \int_{ \Om} \big\langle \Nu_{t,x} ; \big(q(s)-q(r)\big )(\trace(\DD_{\vectorv})- \Div \vectorU)  \big\rangle \dx \dt\\
&  + \Vert \vectorU \Vert_{C^1([0,T]\times \bar{\Om}; \R^{N})} \int^{\tau}_{0}\xi(t) \mathcal{D}(t) \dt.
\end{split} 
\end{align}

 Using the relation between $\mathbb{S}$ and $\mathbb{T}$ we obtain,

 \begin{align}
 \begin{split}
 \mathcal{E}_{mv}(\tau) &+  \frac{\mu}{4}\int_{0}^\tau \int_{ \Om} \langle \Nu_{t,x};\mathbb{T}(\DD_{\vectorv}-\nabla_x \vectorU) : \mathbb{T}(\DD_{\vectorv}-\nabla_x \vectorU)  \rangle \dx \dt \\
 & + \lambda \int_{0}^\tau \int_{ \Om} \langle \Nu_{t,x};\vert \trace(\DD_{\vectorv})-\Div \vectorU\vert^2 \rangle \dx \dt + \mathcal{D}(\tau) \\
  &\leq \int_{ \Om} \bigg[ \big \langle \Nu_{0,x}; \frac{1}{2}s\vert \mathbf{v}- \vectorU_0 \vert^2 + H(s)-H(r_0)-H'(r_0)(s-r_0)\big \rangle\bigg] \dx\\
 & + \int_0^{\tau} \int_{ \Om} \langle \Nu_{t,x}; (s(\vectorv-\vectorU)\cdot \nabla_x )\vectorU \cdot (\vectorU-\vectorv) \rangle \dx \dt\\
 &  + \int_0^{\tau} \int_{ \Om} \langle \Nu_{t,x} ; (s-r)(\vectorU - \vectorv) \rangle \cdot \frac{1}{r}\big( \Div \mathbb{S}(\nabla_x \vectorU)- \nabla_x q(r) \big) \dx \dt  \\
 &  +  \int_0^{\tau} \int_{ \Om} \langle \Nu_{t,x} ; (- h(s) +h(r)+ h'(r)(s -r)) \rangle \; \Div \vectorU\;   \dx \dt\\
 &  + \int_0^{\tau} \int_{ \Om} \big\langle \Nu_{t,x} ; \big(q(s)-q(r)\big )(\trace(\DD_{\vectorv})- \Div \vectorU)  \big\rangle \dx \dt\\
 &  + \Vert \vectorU \Vert_{C^1([0,T]\times \bar{\Om}; \R^{N})} \int^{\tau}_{0}\xi(t) \mathcal{D}(t) \dt = \Sigma_{i=1}^{6} \mathcal{I}_i.
 \end{split} 
 \end{align}

We know that from pressure and density relation \eqref{p-condition} we have,

\begin{Lemma}\label{p by P 1}
	Suppose $H$  is defined as in \eqref{P-defn} and $r$ lies on a compact subset of $(0,\infty)$. Then we have,
	\begin{align}
	H(\varrho)-H(r)-H'(r)(\varrho -r) \geq c(r)  
	\begin{cases}
	&(\varrho -r)^2 \text{ for } r_1 \leq \varrho \leq r_2,\\
	&(1+ \varrho^{\gamma})  \text{ otherwise }\\
	\end{cases},
	\end{align}
	where $c(r)$ is uniformly bounded for $r$ belonging to compact subsets of $(0,\infty)$. 
\end{Lemma}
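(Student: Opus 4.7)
The plan is to analyze the Bregman-type deviation
\begin{equation*}
F(\varrho) := H(\varrho) - H(r) - H'(r)(\varrho - r)
\end{equation*}
in three regimes of $\varrho$, exploiting strict convexity locally and the asymptotic growth of $h$ globally. Fix a compact set $K \subset (0,\infty)$ containing the admissible values of $r$ and choose $0 < r_1 < r_2 < \infty$ with $K \subset (r_1, r_2)$. The workhorse throughout is the identity $H''(\varrho) = h'(\varrho)/\varrho$ from \eqref{relation q and Q}, which together with $h' > 0$ makes $H$ strictly convex on $(0,\infty)$.

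For the central region $\varrho \in [r_1, r_2]$, continuity and positivity of $h'$ yield $H'' \geq c_0 > 0$ on $[r_1, r_2]$, whence Taylor's theorem with integral remainder gives
\begin{equation*}
F(\varrho) = \int_r^\varrho (\varrho - z)\, H''(z) \, dz \geq \tfrac{c_0}{2}(\varrho - r)^2,
\end{equation*}
establishing the quadratic case with a constant uniform in $r \in K$. For the small-density region $\varrho \in [0, r_1]$, the identity $\varrho H'(\varrho) - H(\varrho) = h(\varrho)$ together with $h(0) = 0$ and $h \in C^1$ shows that $H$ extends continuously to $[0,\infty)$ with $H(0) = 0$, so in particular $F(0) = h(r) > 0$ and $F$ is continuous on $[0, r_1] \times K$. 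Since $F(\varrho, r) > 0$ whenever $\varrho \neq r$ and $K$ is disjoint from $[0, r_1]$, compactness yields $\min_{[0, r_1] \times K} F =: c_* > 0$. Because $1 + \varrho^\gamma$ is bounded above on $[0, r_1]$, this translates to $F(\varrho) \geq c_*/(1 + r_1^\gamma) \cdot (1 + \varrho^\gamma)$.

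The remaining large-density region $\varrho \geq r_2$ is where the main obstacle sits. For $\gamma > 1$ the assumption $h'(\varrho)/\varrho^{\gamma-1} \to a > 0$ integrates twice to give $H(\varrho) \sim \frac{a}{\gamma(\gamma-1)} \varrho^\gamma$, so beyond some threshold $\varrho_0$ the affine correction $H(r) + H'(r)(\varrho - r)$ can be absorbed by half of $H(\varrho)$, giving $F(\varrho) \geq c_\infty \varrho^\gamma \geq \tfrac{c_\infty}{2}(1 + \varrho^\gamma)$ there. The delicate case is $\gamma = 1$, where only $H(\varrho) \sim a \varrho \log \varrho$ holds, and one must verify that the logarithmic factor still dominates the linear correction $H'(r)\varrho$ uniformly for $r \in K$; here the bound becomes $F(\varrho) \geq c \varrho \log \varrho \geq c'(1 + \varrho)$ for $\varrho$ large enough. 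The intermediate piece $\varrho \in [r_2, \varrho_0]$ is handled by the same continuity-and-compactness argument used in the small-density region, relying on $r_2 > \max K$ to keep $F$ bounded below. Taking $c(r)$ as the minimum of the three constants, all uniform in $r \in K$, yields the lemma.
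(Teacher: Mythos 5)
Your argument is correct. The paper in fact states this lemma without proof (it is a standard estimate borrowed from the relative-energy literature, cf.\ \cite{FJN2012}, \cite{FNS2011}), and your three-regime decomposition --- strict convexity via $H''(\varrho)=h'(\varrho)/\varrho>0$ on the compact middle range, continuity-and-compactness of the Bregman distance $F$ on $[0,r_1]$ and on $[r_2,\varrho_0]$ (using $F(0,r)=h(r)>0$ and that $K$ is separated from both endpoints), and the growth $H(\varrho)\sim \frac{a}{\gamma(\gamma-1)}\varrho^{\gamma}$ (resp.\ $a\varrho\log\varrho$ when $\gamma=1$) to absorb the affine correction for large $\varrho$ --- is exactly the standard route. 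You were right to isolate $\gamma=1$ as the delicate case; the logarithm is what saves the bound $F\geq c'(1+\varrho)$ there, and all your constants are visibly uniform for $r$ in a compact subset of $(0,\infty)$, which is the uniformity the lemma asserts.
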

 \begin{Rem}\label{p byP rem}
 	We choose $r_1 < \frac{\inf r}{2}$, $ r_2 > 2 \; {\sup r}$ and $1 + \varrho^\gamma \geq \max\{ \varrho, \varrho^2 \},\; \forall \varrho \geq r_2$.  
 \end{Rem}
Hence for \eqref{p-condition} we have,
 \begin{Lemma}
	For $\varrho\geq 0$,
	\begin{align}
	\vert h(\varrho)-h(r) -h'(r) (\varrho -r) \vert \leq C(r)(  H(\varrho)-H(r)-H'(r)(\varrho -r) ),
	\end{align}
	where $C(r)$ is uniformly bounded if $r$ lies in some compact subset of $(0,\infty)$.
\end{Lemma}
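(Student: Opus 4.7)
The plan is to mirror the regime decomposition already used in Lemma \ref{p by P 1}. I would split $\varrho\in[0,\infty)$ into the essential window $r_1\leq\varrho\leq r_2$ (with $r_1,r_2$ chosen in Remark \ref{p byP rem} so as to contain the range of $r$ with room to spare) and its residual complement, then verify that on each piece the quantity $|h(\varrho)-h(r)-h'(r)(\varrho-r)|$ exhibits exactly the same qualitative growth that Lemma \ref{p by P 1} has already identified as a \emph{lower} bound for the Bregman divergence $H(\varrho)-H(r)-H'(r)(\varrho-r)$. Forming the ratio then delivers the pointwise constant $C(r)$, and uniform boundedness on compact subsets of $(0,\infty)$ follows from continuity of the comparison constants in $r$.

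On the essential region, $[r_1,r_2]\subset(0,\infty)$ is compact and bounded away from zero, so from the identity $\varrho H''(\varrho)=h'(\varrho)$ in \eqref{relation q and Q} the function $H''$ is continuous and strictly positive there. Writing
\begin{align*}
h(\varrho)-h(r)-h'(r)(\varrho-r)=\int_{r}^{\varrho}\bigl(h'(z)-h'(r)\bigr)\,\mathrm{d}z
\end{align*}
and using the local Lipschitz behaviour of $h'$ on $[r_1,r_2]$ yields a quadratic upper bound $C_1(r)(\varrho-r)^2$; the matching quadratic lower bound $c(r)(\varrho-r)^2$ on the denominator is precisely the essential-region statement of Lemma \ref{p by P 1}. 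On the residual region I would appeal to the asymptotics $\lim_{\varrho\to\infty}h'(\varrho)/\varrho^{\gamma-1}=a>0$ from \eqref{p-condition}, integrate once to get $h(\varrho)=O(\varrho^\gamma)$, and invoke $\varrho H'(\varrho)-H(\varrho)=h(\varrho)$ to obtain $H(\varrho)\sim\frac{a}{\gamma-1}\varrho^\gamma$ for $\gamma>1$. The linear perturbation $h'(r)(\varrho-r)=O(\varrho)$ is dominated by $\varrho^\gamma$, so the numerator is bounded by $C_2(r)(1+\varrho^\gamma)$, which pairs with the lower bound $c(r)(1+\varrho^\gamma)$ from Lemma \ref{p by P 1}.

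The main subtle point I anticipate is the borderline case $\gamma=1$, in which $H(\varrho)\sim a\varrho\log\varrho$ grows sub-polynomially. There I would invoke the refinement $1+\varrho^\gamma\geq\max\{\varrho,\varrho^2\}$ on $\{\varrho\geq r_2\}$ from Remark \ref{p byP rem} together with the inequality $\varrho\log\varrho\gtrsim\varrho$ valid for large $\varrho$; since the numerator is also $O(\varrho)$ in this regime, both sides remain of matching linear order and the argument closes. Uniform boundedness of $C(r)=\max\{C_1(r),C_2(r)\}/c(r)$ over $r$ ranging through a compact subset of $(0,\infty)$ then follows because $C_1(r)$ and $C_2(r)$ depend continuously on $\inf r$, $\sup r$, and the local modulus of continuity of $h'$ on $[r_1,r_2]$, all of which are uniformly controlled when $r$ stays in such a compact subset.
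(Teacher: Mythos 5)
Your overall architecture --- the split into the essential window $[r_1,r_2]$ and its complement, with the numerator compared regime by regime against the lower bounds for the Bregman divergence supplied by Lemma \ref{p by P 1} --- is exactly the comparison the paper intends (the lemma is stated there without proof, as a consequence of \eqref{p-condition} and Lemma \ref{p by P 1}), and your treatment of the residual region, including the borderline case $\gamma=1$ and the uniformity of the constants as $r$ ranges over a compact subset of $(0,\infty)$, is sound.

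The one genuine gap is in the essential region. You write $h(\varrho)-h(r)-h'(r)(\varrho-r)=\int_r^\varrho\bigl(h'(z)-h'(r)\bigr)\,\mathrm{d}z$ and then invoke ``the local Lipschitz behaviour of $h'$ on $[r_1,r_2]$'' to obtain the quadratic bound $C_1(r)(\varrho-r)^2$. But \eqref{p-condition} only assumes $h\in C^1[0,\infty)$ with $h'>0$; mere continuity of $h'$ gives $\vert h(\varrho)-h(r)-h'(r)(\varrho-r)\vert\le\omega(\vert\varrho-r\vert)\,\vert\varrho-r\vert$ with $\omega$ the modulus of continuity of $h'$ on $[r_1,r_2]$, which is $o(\vert\varrho-r\vert)$ but not $O((\varrho-r)^2)$. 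This is not a removable technicality: if, say, $h'(z)=1+\sqrt{\vert z-1\vert}$ near $z=1$ (extended so that \eqref{p-condition} holds), then at $r=1$ the numerator behaves like $\tfrac{2}{3}\vert\varrho-1\vert^{3/2}$ while the denominator behaves like $\tfrac{1}{2}(\varrho-1)^2$ (since $H''=h'/\varrho$ stays bounded there by \eqref{relation q and Q}), so the ratio blows up as $\varrho\to 1$ and no finite $C(1)$ exists. To close the argument you must assume in addition that $h'$ is locally Lipschitz on $(0,\infty)$ --- e.g.\ $h\in C^2(0,\infty)$, as holds for the model case $h(\varrho)=a\varrho^\gamma$ and as is assumed in \cite{F2018} and \cite{FJN2012}; then $\sup_{[r_1,r_2]}\vert h''\vert<\infty$ gives $\vert h(\varrho)-h(r)-h'(r)(\varrho-r)\vert\le\tfrac{1}{2}\sup_{[r_1,r_2]}\vert h''\vert\,(\varrho-r)^2$, which pairs with the lower bound $H(\varrho)-H(r)-H'(r)(\varrho-r)\ge\tfrac{1}{2}\min_{[r_1,r_2]}(h'(z)/z)\,(\varrho-r)^2$, and your comparison goes through. (A minor side remark: for $\gamma=1$ the inequality $1+\varrho^\gamma\ge\varrho^2$ of Remark \ref{p byP rem} fails for large $\varrho$, but you only need $1+\varrho\gtrsim\varrho$ in that regime, so your residual-region estimate is unaffected.)
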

Next we have to estimate $\mathcal{I}_i$ for $i=2(1)5$ of \eqref{re3}. 
\begin{itemize}
	\item \textbf{Remainder term $\mathcal{I}_2$:}\\
	We have,
	\begin{align}
	\vert \mathcal{I}_2 \vert \leq \Vert \vectorU \Vert_{C^1([0,T]\times \bar{\Om}; \R^{N})} \int_{0}^{\tau} \Epsilon_{mv}(t) \dt.
	\end{align}
	\item \textbf{Remainder term $\mathcal{I}_4$:}\\
	Similarly using lemma we obtain,
	\begin{align}
	\vert \mathcal{I}_4 \vert \leq C \int_{0}^{\tau} \Epsilon_{mv}(t) \dt.
	\end{align}
	\item \textbf{Remainder term $\mathcal{I}_3$:}\\
	Let $\text{supp}(q)=[q_1,q_2]$.
	Consider $r_1<\min\{\frac{q_1}{2}, \frac{1	}{2}\inf r\}$ and $r_2>\max \{2 q_2, 2 \times \sup r \}$.  
	We consider $\psi \in C_c^{\infty}(0,\infty),\; 0\leq \psi \leq 1,\; \psi(s)=1 \text{ for }s\in (r_1,r_2)$.	
	Now, 
	\begin{align*}
	&\langle \Nu_{t,x} ; (s-r)(\vectorU-\vectorv) \rangle\\
	&=\langle \Nu_{t,x}; \psi(s)(s-r)(\vectorU-\vectorv)\rangle+\langle \Nu_{t,x}; (1-\psi(s))(s-r)(\vectorU-\vectorv)\rangle
	\end{align*}
	
	Consequently we obtain 
	\begin{align}
	\begin{split}
	\langle \Nu_{t,x}; \psi(s) (s-r)(\vectorU-\vectorv) \rangle \leq \frac{1}{2} \bigg \langle \Nu_{t,x}; \frac{\psi^2(s)}{\sqrt{s}} (s-r)^2 \bigg \rangle + \frac{1}{2} \bigg \langle \Nu_{t,x}; \frac{\psi^2(s)}{\sqrt{s}} s\vert \vectorU-\vectorv \vert^2 \bigg \rangle.
	\end{split}
	\end{align}

	Now using that $\psi$ is compactly supported in $(0,\infty)$ and  lemma \eqref{p by P 1} we conclude that,
	\begin{align}
	\begin{split}
	&\int_0^{\tau} \int_{ \Om} \langle \Nu_{t,x} ; \psi(s) (s-r)(\vectorU - \vectorv) \rangle \cdot \frac{1}{r}\big( \Div \mathbb{S}(\nabla_x \vectorU)- \nabla_x q(r) \big) \dx \dt \\
	& \leq \Vert \frac{1}{r}\big( \Div \mathbb{S}(\nabla_x \vectorU)- \nabla_x q(r) \big) \Vert_{C([0,T]\times \bar{\Om}; \R^{N})} \int_{0}^{\tau} \Epsilon_{mv}(t) \dt.
	\end{split}
	\end{align}
	
	We rewrite $1-\psi(s)=w_1(s)+w_2(s)$, where $\text{supp}(w_1)\subset [0,r_1)$ and $\text{supp}(w_2)\subset (0,r_2]$,
	
	\begin{align*}
	&\langle \Nu_{t,x}; (1-\psi(s)) (s-r)(\vectorU-\vectorv) \rangle=\langle \Nu_{t,x}; (w_1(s)+w_2(s)) (s-r)(\vectorU-\vectorv) \rangle.
	\end{align*}
	
	For $\delta>0$ we obtain,
	\begin{align*}
	\langle \Nu_{t,x}; w_1(s) (s-r)(\vectorU-\vectorv) \rangle\leq c(\delta) \langle \Nu_{t,x}; w_1^2(s)(s-r)^2  \rangle + \delta \langle \Nu_{t,x}; \vert \vectorU - \vectorv \vert^2 \rangle.
	\end{align*}
	The first term on the right hand side is controlled by $\Epsilon$ meanwhile the second term can be absorbed in the left hand side of \eqref{re3} by virtue of generalised Korn-Poincar{\'e}  inequality as in \eqref{poincare mv}. Then we have,
	\begin{align*}
	\langle \Nu_{t,x}; w_1(s) (s-r)(\vectorU-\vectorv) \rangle\leq& C \int_{0}^{\tau} \Epsilon_{mv}(t) \dt \\
	&+ \delta c_P \int_{0}^{\tau} \int_{ \Om} \langle \Nu_{t,x} ; \vert \mathbb{T}(\DD_{\vectorv}) - \mathbb{T}(\nabla_x { \vectorU}) \vert^2 \rangle \dx \dt.
	\end{align*}
	
	Now, 
	\begin{align*}
	\langle \Nu_{t,x}; w_2(s) (s-r)(\vectorU-\vectorv) \rangle\leq c \langle \Nu_{t,x}; w_2(s)(s+ s  \vert \vectorU - \vectorv \vert^2)\rangle.
	\end{align*}
	In this inequality both integrals can be controlled by $\Epsilon_{mv}$.\\
	We take $\delta$ small enough and combine all the above terms to obtain,
	\begin{align}
	\begin{split}
	\vert \mathcal{I}_3 \vert \leq c \int_{0}^{\tau} \Epsilon_{mv}(t) \;\dt + \frac{\mu}{8}\int_{0}^{\tau} \int_{ \Om} \langle \Nu_{t,x} ; \vert \mathbb{T}(\DD_{\vectorv}) - \mathbb{T}(\nabla_x { \vectorU}) \vert^2 \rangle \dx \dt.
	\end{split}
	\end{align}

	\item \textbf{Remainder term $\mathcal{I}_5$:}\\
	For $\epsilon>0$ we have,
	\begin{align*}
	&\big\langle \Nu_{t,x} ; \big(q(s)-q(r)\big )(\trace(\DD_{\vectorv})- \Div \vectorU)  \big\rangle\\
	 &\leq \frac{1}{4\epsilon}\big\langle \Nu_{t,x} ; \big(q(s)-q(r)\big )^2  \big\rangle+ \epsilon \big\langle \Nu_{t,x} ; \vert \trace(\DD_{\vectorv})- \Div \vectorU\vert^2  \big\rangle,\; \text{ for } \epsilon>0.
	\end{align*}
	Further using a similar $\psi$ we obtain,
	
	\begin{align}
	\begin{split}
	\vert \mathcal{I}_5 \vert \leq c \int_{0}^{\tau} \Epsilon_{mv}(t) \;\dt + \frac{\lambda}{2}\int_{0}^{\tau} \int_{ \Om} \langle \Nu_{t,x} ; \vert \trace(\DD_{\vectorv}) - \Div\vectorU \vert^2 \rangle \dx \dt.
	\end{split}
	\end{align}
	
\end{itemize}

\subsubsection{Proof of the theorem \eqref{main-theorem}:}
Considering the above discussion, additional assumption \eqref{simple reg class} and combining all $\mathcal{I}_i$ for $i=1(1)6$, we obtain,
\begin{align}\label{re6}
\begin{split}
\mathcal{E}_{mv}(\tau) &+  \frac{\mu}{8}\int_{0}^\tau \int_{ \Om} \langle \Nu_{t,x};\mathbb{T}(\DD_{\vectorv}-\nabla_x \vectorU) : \mathbb{T}(\DD_{\vectorv}-\nabla_x \vectorU)  \rangle \dx \dt \\
& + \frac{\lambda}{2} \int_{0}^\tau \int_{ \Om} \langle \Nu_{t,x};\vert \trace(\DD_{\vectorv})-\Div \vectorU\vert^2 \rangle \dx \dt + \mathcal{D}(\tau) \\
&\leq \int_{ \Om} \bigg[ \big \langle \Nu_{0,x}; \frac{1}{2}s\vert \mathbf{v}- \vectorU_0 \vert^2 + H(s)-H(r_0)-H'(r_0)(s-r_0)\big \rangle\bigg] \dx\\
&\; +C(r,\vectorU,q)  \int_0^\tau \mathcal{E}_{mv}(t)\; \dt + \int_0^\tau \xi(t) \mathcal{D}(t) \; \dt  
\end{split} 
\end{align}

Now applying Gr{\"o}nwall's lemma, we conclude, 
\begin{align}
\begin{split}
&\mathcal{E}_{mv}(\varrho,\vectoru \vert r,\vectorU)(t) + \mathcal{D}(t) \\
&\leq c(T) \int_{ \Om}  \bigg[ \big \langle \Nu_{0,x}; \frac{1}{2}s\vert \mathbf{v}- \vectorU(0,\cdot) \vert^2 + H(s)-H(r(0,\cdot))-H'(r_0)(s-r(0,\cdot))\big \rangle\bigg] \dx
\end{split}
\end{align}
for a.e. $\tau \in [0,T]$.\\
\begin{Rem}
For simplicity of the proof we assume \eqref{simple reg class}. If we stick to only \eqref{reg_class_str_sol} then we have $ \int_0^\tau \eta_{(r,\vectorU,q)}(t)  \mathcal{E}_{mv}(t)\; \dt$ where $\eta_{(r,\vectorU,q)} \in L^1(0,T)$ instead of the term
$C(r,\vectorU,q)  \int_0^\tau \mathcal{E}_{mv}(t)\; \dt $ in \eqref{re6}.  	
\end{Rem}

\section{Concluding remarks}

 Instead of considering $q\in C_c^{1}(0,\infty)$ if we assume $q\in C^1$ with $q'(\varrho) \approx \varrho^\alpha$ as $\varrho \rightarrow \infty$, then we can obtain Weak (measure-valued)-Strong uniqueness principle if $\alpha+1\leq \frac{\gamma}{2}$. Even if $q$ is a globally Lipschitz function in $[0,\infty)$ we have the same principle for $\gamma\geq 2$. Further existence of such an rDMV solution can be generated by limit of weak solutions of the approximate problem \eqref{cnse:cont:ap}-\eqref{cnse:mom:ap} whose existence can be guaranteed by the work of Bresch and Jabin in \cite{BJ2018}.

\vspace{5mm}

\centerline{ \bf Acknowledgement} 
\vspace{2mm}

The work was supported by Einstein Stiftung, Berlin. I would like to thank my  Ph.D supervisor Prof. E. Feireisl for his valuable suggestions and comments.

\begin{bibdiv}
	\begin{biblist}
		
		\bib{AKM1983}{book}{
			author={Antontsev, S.~N.},
			author={Kazhikhov, A.~V.},
			author={Monakhov, V.~N.},
			title={Kraevye zadachi mekhaniki neodnorodnykh zhidkoste\u{\i}},
			publisher={``Nauka'' Sibirsk. Otdel., Novosibirsk},
			date={1983},
			review={\MR{721637}},
		}
		
		\bib{BJ2018}{article}{
			author={Bresch, D.},
			author={Jabin, P.-E.},
			title={Global existence of weak solutions for compressible
				{N}avier-{S}tokes equations: thermodynamically unstable pressure and
				anisotropic viscous stress tensor},
			date={2018},
			ISSN={0003-486X},
			journal={Ann. of Math. (2)},
			volume={188},
			number={2},
			pages={577\ndash 684},
			url={https://doi.org/10.4007/annals.2018.188.2.4},
			review={\MR{3862947}},
		}
		
		\bib{BFN2018}{article}{
			author={{Brezina}, J.},
			author={{Feireisl}, E.},
			author={{Novotn{\'y}}, A.},
			title={Stability of strong solutions to the {N}avier-{S}tokes-fourier
				system},
			date={2018-02},
			journal={arXiv e-prints},
			pages={arXiv:1802.01785},
			eprint={1802.01785},
		}
		
		\bib{C2018}{article}{
			author={{Chaudhuri}, N.},
			title={On weak-strong uniqueness for compressible {N}avier-{S}tokes
				system with general pressure laws},
			date={2018-11},
			journal={arXiv e-prints},
			pages={arXiv:1811.08957},
			eprint={1811.08957},
		}
		
		\bib{D1979}{article}{
			author={Dafermos, C.M.},
			title={The second law of thermodynamics and stability},
			date={1979},
			journal={Arch. Rational Mech. Anal.},
			volume={{\bf 70}},
			pages={167\ndash 179},
		}
		
		\bib{Di1985}{article}{
			author={DiPerna, R.~J.},
			title={Measure-valued solutions to conservation laws},
			date={1985},
			ISSN={0003-9527},
			journal={Arch. Rational Mech. Anal.},
			volume={88},
			number={3},
			pages={223\ndash 270},
			url={https://doi.org/10.1007/BF00752112},
			review={\MR{775191}},
		}
		
		\bib{F2002}{article}{
			author={Feireisl, {E}.},
			title={Compressible {N}avier-{S}tokes equations with a non-monotone
				pressure law},
			date={2002},
			ISSN={0022-0396},
			journal={J. Differential Equations},
			volume={184},
			number={1},
			pages={97\ndash 108},
			url={https://doi.org/10.1006/jdeq.2001.4137},
			review={\MR{1929148}},
		}
		
		\bib{F2004b}{book}{
			author={Feireisl, E.},
			title={Dynamics of viscous compressible fluids},
			series={Oxford Lecture Series in Mathematics and its Applications},
			publisher={Oxford University Press, Oxford},
			date={2004},
			volume={26},
			ISBN={0-19-852838-8},
			review={\MR{2040667}},
		}
		
		\bib{F2018}{article}{
			author={{Feireisl}, E.},
			title={On weak-strong uniqueness for the compressible {N}avier-{S}tokes
				system with non-monotone pressure law},
			date={2018-06},
			journal={ArXiv e-prints},
			eprint={1806.08924},
		}
		
		\bib{FPAW2016}{article}{
			author={Feireisl, E.},
			author={Gwiazda, P.},
			author={{\'S}wierczewska-Gwiazda, A.},
			author={Wiedemann, E.},
			title={Dissipative measure-valued solutions to the compressible
				{N}avier-{S}tokes system},
			date={2016},
			ISSN={0944-2669},
			journal={Calc. Var. Partial Differential Equations},
			volume={55},
			number={6},
			pages={Art. 141, 20},
			url={https://doi.org/10.1007/s00526-016-1089-1},
			review={\MR{3567640}},
		}
		
		\bib{FJN2012}{article}{
			author={Feireisl, E.},
			author={Jin, B.~J.},
			author={Novotn{\'y}, A.},
			title={Relative entropies, suitable weak solutions and weak-strong
				uniqueness for the compressible {N}avier-{S}tokes system},
			date={2012},
			ISSN={1422-6928},
			journal={J. Math. Fluid Mech.},
			volume={14},
			number={4},
			pages={717\ndash 730},
			url={https://doi.org/10.1007/s00021-011-0091-9},
			review={\MR{2992037}},
		}
		
		\bib{FL2018}{article}{
			author={Feireisl, E.},
			author={Luk{\'a}{\v c}ov{\'a}-Medvidov{\'a}, M.},
			title={Convergence of a mixed finite element--finite volume scheme for
				the isentropic {N}avier-{S}tokes system via dissipative measure-valued
				solutions},
			date={2018},
			ISSN={1615-3375},
			journal={Found. Comput. Math.},
			volume={18},
			number={3},
			pages={703\ndash 730},
			url={https://doi.org/10.1007/s10208-017-9351-2},
			review={\MR{3807359}},
		}
		
		\bib{FLMS2018}{article}{
			author={{Feireisl}, E.},
			author={{Lukacova-Medvidova}, M.},
			author={{Mizerova}, H.},
			author={{She}, B.},
			title={Convergence of a finite volume scheme for the compressible
				{N}avier--{S}tokes system},
			date={2018-11},
			journal={arXiv e-prints},
			pages={arXiv:1811.02866},
			eprint={1811.02866},
		}
		
		\bib{FNS2011}{article}{
			author={Feireisl, E.},
			author={Novotn{\'y}, A.},
			author={Sun, Y.},
			title={Suitable weak solutions to the {N}avier-{S}tokes equations of
				compressible viscous fluids},
			date={2011},
			ISSN={0022-2518},
			journal={Indiana Univ. Math. J.},
			volume={60},
			number={2},
			pages={611\ndash 631},
			url={https://doi.org/10.1512/iumj.2011.60.4406},
			review={\MR{2963786}},
		}
		
		\bib{PL1998}{book}{
			author={Lions, P.-L.},
			title={Mathematical topics in fluid mechanics. {V}ol. 2},
			series={Oxford Lecture Series in Mathematics and its Applications},
			publisher={The Clarendon Press, Oxford University Press, New York},
			date={1998},
			volume={10},
			ISBN={0-19-851488-3},
			note={Compressible models, Oxford Science Publications},
			review={\MR{1637634}},
		}
		
		\bib{MNRR1996}{book}{
			author={M{\'a}lek, J.},
			author={Ne{\v c}as, J.},
			author={Rokyta, M.},
			author={R{\o}circu{\v z}i{\v c}ka, M.},
			title={Weak and measure-valued solutions to evolutionary {PDE}s},
			series={Applied Mathematics and Mathematical Computation},
			publisher={Chapman \& Hall, London},
			date={1996},
			volume={13},
			ISBN={0-412-57750-X},
			url={https://doi.org/10.1007/978-1-4899-6824-1},
			review={\MR{1409366}},
		}
		
		\bib{N1993}{article}{
			author={Neustupa, J.},
			title={Measure-valued solutions of the {E}uler and {N}avier-{S}tokes
				equations for compressible barotropic fluids},
			date={1993},
			ISSN={0025-584X},
			journal={Math. Nachr.},
			volume={163},
			pages={217\ndash 227},
			url={https://doi.org/10.1002/mana.19931630119},
			review={\MR{1235068}},
		}
		
		\bib{PW2015}{article}{
			author={Plotnikov, P.~I.},
			author={Weigant, W.},
			title={Isothermal {N}avier-{S}tokes equations and {R}adon transform},
			date={2015},
			ISSN={0036-1410},
			journal={SIAM J. Math. Anal.},
			volume={47},
			number={1},
			pages={626\ndash 653},
			url={https://doi.org/10.1137/140960542},
			review={\MR{3305369}},
		}
		
	\end{biblist}
\end{bibdiv}

%	\bibliography{nilasis_ws}
%	\bibliographystyle{unsrt}

\end{document}